\newtheorem{theorem}{Theorem}[section]
\newtheorem*{theorem*}{Theorem}
\theoremstyle{remark}
\newcommand{\CC}{\mathds{C}}
\newcommand{\NN}{\mathds{N}}
\newcommand{\ind}{\mathds{1}}
\newcommand{\EE}{\mathbb{E}}
\begin{document}
\title{On multiplicative functions which are small on average and zero free regions for the Riemann zeta function}
\author{Marco Aymone}
\begin{abstract} In this short note we prove the following result: If a completely multiplicative function $f:\mathbb{N}\to[-1,1]$ is small on average in the sense that $\sum_{n\leq x}f(n)\ll x^{1-\delta}$, for some $\delta>0$, and if the Dirichlet series of $f$, say $F(s)$, is such that $F(1)=0$, then we obtain that for any $\epsilon>0$, $\sum_{p\leq x}(1+f(p))\log p\ll x^{1-\delta+\epsilon}$. Moreover, a necessary condition for the existence of such $f$ is that the Riemann zeta function $\zeta(s)$ has no zeros in the half plane $Re(s)>1-\delta$.

\end{abstract}
\maketitle

\section{Introduction}

We say that $f:\NN\to\CC$ is a multiplicative function if $f(nm)=f(n)f(m)$ whenever $\gcd(n,m)=1$, and we say that $f$ is completely multiplicative if this relation holds for all $n$ and $m$. 

A well known result in analytic number theory is that the Riemann zeta function $\zeta(s)$ is free of zeros in the half plane $Re(s)>1-\delta$, for some $0<\delta <1/2$, if and only if  $\sum_{n\leq x}\lambda(n)\ll x^{1-\delta+\epsilon}$, for all $\epsilon>0$, where $\lambda$ is the Liouville function (the completely multiplicative function that takes $-1$ at primes). Moreover, the Dirichlet series of $\lambda$ is $\zeta(2s)/\zeta(s)$ and hence has a zero at $s=1$. Therefore, under the Riemann hypothesis, $\lambda$ is an example of a completely multiplicative function such that the partial sums $\sum_{n\leq x}\lambda(n)\ll x^{1-\delta}$ for some $\delta>0$, and such that its Dirichlet series vanishes at $s=1$.

In this paper we are interested in the following question:

\noindent\textbf{Question:} \textit{Let $f:\NN\to[-1,1]$ be a completely multiplicative function and let $\mathcal{P}$ be the set of primes. If the partial sums $\sum_{n\leq x}f(n)\ll x^{1-\delta}$ for some $\delta>0$ and if the Dirichlet series $F(s)=\sum_{n=1}^\infty f(n)n^{-s}$ has a zero at $s=1$, then what is the mean behavior of the values $(f(p))_{p\in\mathcal{P}}$?}

In this short note we answer this question in the language of pretentious theory introduced by Granville and Soundararajan \cite{granvillepretentious}: Given two multiplicative functions $|f|,|g|\leq 1$, we say that $f$ is $g$-pretentious if
$$\sum_{p}\frac{1-Re(f(p)\bar{g}(p))}{p}<\infty.$$  
We show that any other example of a completely multiplicative function $f:\NN\to[-1,1]$ that satisfies the hypothesis of our question is $\lambda$-pretentious, and more than this: 

\begin{theorem}\label{teorema principal} Let $f:\NN\to[-1,1]$ be a completely multiplicative function such that $\sum_{n\leq x}f(n)\ll x^{1-\delta}$, for some $0<\delta<1/2$, and $F(s)=\sum_{n=1}^\infty\frac{f(n)}{n^s}$ satisfies $F(1)=0$. Then, $f$ is $\lambda$-pretentious and for all $\epsilon>0$
\begin{equation}\label{equacao teorema principal}
\sum_{p\leq x}(1+f(p))\log p\ll x^{1-\delta+\epsilon}.
\end{equation}
Moreover, if such function $f$ exists, then $\zeta(s)$ has no zeros in the half plane $Re(s)>1-\delta$.
\end{theorem}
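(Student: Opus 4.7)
I would split the argument into three stages, corresponding to the three conclusions of the theorem.

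\emph{Stage 1 ($f$ is $\lambda$-pretentious, and the zero of $F$ at $s=1$ is simple).} Taking logarithms of the Euler products on $Re(s)>1$,
\[
\log F(s) + \log\zeta(s) = \sum_p \frac{1+f(p)}{p^s} + R(s),
\]
where $R$ is holomorphic on $Re(s)>1/2$. If the zero of $F$ at $s=1$ has order $m\ge 1$, the left side behaves as $(m-1)\log(s-1) + O(1)$ as $s\to 1^+$. Since the right side is a sum of non-negative terms plus a bounded function, it is bounded below; this forces $m=1$ (else the left side would tend to $-\infty$), and passing to the limit yields $\sum_p(1+f(p))/p < \infty$.

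\emph{Stage 2 (the bound \eqref{equacao teorema principal}).} Consider
\[
J(s) := -\frac{F'(s)}{F(s)} - \frac{\zeta'(s)}{\zeta(s)} = \sum_{n=1}^\infty \frac{\Lambda(n)(1+f(n))}{n^s},
\]
a Dirichlet series with non-negative coefficients. Let $\sigma_c\le 1$ be its abscissa of convergence. By Landau's theorem, $\sigma_c$ is a real singularity of the analytic continuation of $J$. Since $\zeta$ has no real zeros in $(0,1)\cup(1,\infty)$ and the pole of $\zeta$ at $s=1$ is cancelled by the simple zero of $F$ from Stage 1, the only possible real singularities of $J$ on $(1-\delta, 1]$ come from real zeros of $F$ in $(1-\delta, 1)$. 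Suppose $\sigma_c > 1-\delta$: then $\sigma_c$ is a real zero of $F$ of some order $m\ge 1$, so $J$ has a simple pole there with residue $-m<0$, and $J(s)\to -\infty$ as $s\to \sigma_c^+$. On the other hand, non-negativity of the coefficients together with divergence of the series at $\sigma_c$ (forced by the pole) gives $J(s)\to +\infty$ as $s\to \sigma_c^+$ by monotone convergence—a contradiction. Hence $\sigma_c\le 1-\delta$, and convergence at $s=1-\delta+\epsilon$ implies $\sum_{n\le x}\Lambda(n)(1+f(n)) \ll x^{1-\delta+\epsilon}$ by the standard bound for Dirichlet series with non-negative coefficients. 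Subtracting the $O(\sqrt x)$ prime-power contribution yields \eqref{equacao teorema principal}.

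\emph{Stage 3 ($\zeta$ has no zeros in $Re(s)>1-\delta$).} From \eqref{equacao teorema principal} and Abel summation, $\sum_p (1+f(p))\log p/p^s$ is holomorphic on the half-plane $Re(s) > 1-\delta$; adding the holomorphic correction from higher prime powers (which converges absolutely on $Re(s)>1/2$) shows that $J$ itself is holomorphic on $Re(s)>1-\delta$. If $\zeta$ had a zero at some $\rho$ with $Re(\rho)>1-\delta$, then $J$ would have a pole at $\rho$ with residue $-(m_F(\rho)+m_\zeta(\rho))<0$ (since $m_\zeta(\rho)\ge 1$ and $F$ is holomorphic there), contradicting holomorphy. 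Hence $\zeta$ has no zeros in $Re(s)>1-\delta$.

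\emph{Main obstacle.} The crux is Stage 2: combining Landau's theorem with the sign of the residue of $J$ at $\sigma_c$ to exclude real zeros of $F$ in $(1-\delta, 1)$. This sign–positivity clash is what promotes the hypotheses on $f$ to the partial sum bound, and then Stage 3 flows from the resulting holomorphy of $J$ in the full complex half-plane.
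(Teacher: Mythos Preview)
Your argument is correct but follows a genuinely different route from the paper's. The paper applies Landau's theorem to the Dirichlet series $H(s)=\zeta(s)F(s)=\sum_n h(n)n^{-s}$ with $h=1\ast f\ge 0$; since $H$ is visibly holomorphic on $Re(s)>1-\delta$ (the simple pole of $\zeta$ is absorbed by the zero of $F$ at $s=1$, regardless of its order), Landau immediately gives absolute convergence of $H$, hence of its Euler product, hence $\sum_p(1+f(p))p^{-\sigma}<\infty$ for every $\sigma>1-\delta$, and \eqref{equacao teorema principal} follows by partial summation. You instead apply Landau to the logarithmic derivative $J(s)=-F'/F-\zeta'/\zeta$, which forces you to first prove the zero of $F$ at $1$ is simple (your Stage~1) and then to exclude further real zeros of $F$ in $(1-\delta,1)$ via the clash between non-negativity of $J$ on the real axis and the negative residue such a zero would produce. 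So the paper's route to \eqref{equacao teorema principal} is shorter: it needs no preliminary simplicity argument and no residue-sign step. On the other hand, for the zero-free region your approach is cleaner: once $J$ is known to converge (hence be holomorphic) on $Re(s)>1-\delta$, any zero of $\zeta$ there would force a pole of $J$, and you are done. The paper instead builds a second auxiliary series $G(s)=\zeta(s)F_{\mu^2}(s)$ with $g=1\ast f\mu^2\ge 0$, applies Landau again, and then checks by an explicit Euler-product computation that $G(s)=\prod_p(p^s+f(p))/(p^s-1)$ is nonvanishing on $Re(s)>1-\delta$, whence $1/\zeta=F_{\mu^2}/G$ is holomorphic there. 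In short: the paper front-loads the simplicity by working with $\zeta F$ rather than its log-derivative, but pays for it later with a second Landau application and an Euler-product nonvanishing check; your log-derivative approach pays up front with the residue-sign argument but then gets the zero-free region for free. One minor redundancy: in your Stage~3 you rederive holomorphy of $J$ on the half-plane from \eqref{equacao teorema principal} via Abel summation, but this is already immediate from $\sigma_c\le 1-\delta$ together with non-negativity of the coefficients.
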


In \cite{koukou} it has been proved (Theorem 1.6) that if $\sum_{n\leq x}f(n)\ll x^{1-\delta}$ for some $\delta>0$, and if the Dirichlet series $F(s)$ satisfies $F(1)=0$, then for some $0<\alpha\leq \frac{\delta}{61}$, the sum over primes $\sum_{p\leq x}(1+f(p))\log p\ll x^{1-\alpha}$. Thus the novelty here is the exponent $x^{1-\delta+\epsilon}$ and the necessary condition in which the half plane $Re(s)>1-\delta$ must be a zero free region for $\zeta(s)$.

The result above also improves the results in \cite{primeiropaper}, in which it has been proved a result of same quality of Theorem \ref{teorema principal} under randomness and bias assumptions. Indeed, if $(f(p))_p$ is a sequence of independent random variables with values in $\{-1,1\}$, and $f(n)$ is extended to all non-negative integers $n$ as a multiplicative function supported on the squarefree integers, then, under the bias assumptions $\EE f(p)<0$ and $\sum_{p}\frac{f(p)}{p}=-\infty$ almost surely, it has been proved (Theorems 1.2 and 1.4 of \cite{primeiropaper}) that the conclusions of Theorem \ref{teorema principal} holds almost surely. The novelty here is that we can obtain same conclusions under less restrictive conditions such as bias and randomness assumptions, and instead of almost all, we obtain conclusions for all multiplicative functions satisfying the hypothesis of Theorem \ref{teorema principal}.

\section{Proof of the main result}
 
 As always, $p$ denotes a generic prime number. We say that $f(x)\ll g(x)$ if there exists a constant $C>0$, such that $|f(x)|\leq C|g(x)|$ for all sufficiently large $x>0$.

\begin{proof}[Proof of Theorem \ref{teorema principal}] Let $h=1\ast f$. Then $h$ is multiplicative and for each prime $p$ and any power $m\in\NN$
\begin{equation}\label{equacao h nos primos}
h(p^m)=1+f(p)+f(p)^2+...+f(p)^m.
\end{equation}
If $f(p)\geq 0$, then by \eqref{equacao h nos primos}, $h(p^m)\geq 0$. If $-1\leq f(p)<0$, then by \eqref{equacao h nos primos}, $h(p^m)=\frac{1-f(p)^{m+1}}{1-f(p)}\geq 0$. Thus $h(p^m)\geq 0$ for all primes $p$ and all powers $m\in\NN$. Set $H(s):=\sum_{n=1}^\infty\frac{h(n)}{n^s}$. Now since $\sum_{n\leq x}f(n)\ll x^{1-\delta}$, we have that $F(s)=\sum_{n=1}^\infty \frac{f(n)}{n^s}$ converges and it is analytic in the half plane $Re(s)>1-\delta$. Moreover, as $F(1)=0$, we have that $H(s)=\zeta(s)F(s)$ is analytic in $Re(s)>1-\delta$, since the simple pole of $\zeta(s)$ at $s=1$ cancel with the (analytic) zero of $F(s)$ at $s=1$. Thus, $H(s)$ is a Dirichlet series of non-negative terms which is analytic in $Re(s)>1-\delta$. Thus, by the Landau's oscillation Theorem (see, for instance \cite{montgomerylivro}, pg. 16, Theorem 1.7), $H(s)$ converges in the half plane $Re(s)>1-\delta$.  Since the convergence is actually absolute, we have the convergence of the Euler product of $H(s)$ (see, for instance, \cite{tenenbaumlivro}, pg. 106, Remark of Theorem 2).  The convergence of this Euler product of $H(s)$ implies that for each $\sigma>1-\delta$ 
\begin{equation*}
\sum_{p}\sum_{m=1}^\infty \frac{h(p^m)}{p^{m\sigma}}<\infty.
\end{equation*}
In particular, $f$ is more than $\lambda$-pretentious:
\begin{equation*}
\sum_p \frac{h(p)}{p^\sigma}=\sum_p \frac{1+f(p)}{p^\sigma}<\infty.
\end{equation*}
Since the derivative of a convergent Dirichlet series is also convergent, 
\begin{equation*}
\sum_{p}\frac{(1+f(p))\log p}{p^\sigma}<\infty.
\end{equation*}
Set $\ind_{prime}(n)$ to be equal to $1$ if $n$ is prime and $0$ otherwise. Thus we have that the following series converges:
\begin{equation*}
\sum_{n=1}^\infty \frac{\ind_{prime}(n)(1+f(n))\log n}{n^\sigma}.
\end{equation*}
Now by Kroenecker's Lemma (see \cite{shiryaev}, pg. 390 Lemma 2), or by partial summation, we have that $\sum_{p\leq x}(1+f(p))\log p=\sum_{n\leq x}\ind_{prime}(n)(1+f(n))\log n=o( x^\sigma)$. This shows the first part of Theorem \ref{teorema principal}.

Now we are going to proof the second part of Theorem \ref{teorema principal}. We claim that the hypothesis $\sum_{n\leq x}f(n)\ll x^{1-\delta}$ implies that the series $\sum_{n=1}^\infty \frac{f(n)\mu^2(n)}{n^s}$ converges in the half plane $Re(s)>1-\delta$. Indeed, for $Re(s)>1$
\begin{align*}
F_{\mu^2}(s):=\sum_{n=1}^\infty \frac{f(n)\mu^2(n)}{n^s}&=\prod_p\left(1+\frac{f(p)}{p^s}\right)\\
&=\prod_p\left(1-\frac{f(p)^2}{p^{2s}}\right)\left(1-\frac{f(p)}{p^s}\right)^{-1}\\
&=F(s)\prod_p\left(1-\frac{f(p)^2}{p^{2s}}\right)\\
&:=F(s)U(s).
\end{align*}
Observe that $U(s)$ converges absolutely in $Re(s)>1/2$ and $F(s)$ converges in $Re(s)>1-\delta$. Thus $F_{\mu^2}(s)$ converges in $Re(s)>1-\delta$ (see for instance, \cite{tenenbaumlivro}, pg. 122, Notes 1.1). Now observe that,  $\frac{F_{\mu^2}}{F}(s)=U(s)$ is analytic and does not vanish in $Re(s)>1/2$. Hence, as $F(1)=0$, we obtain that $F_{\mu^2}(1)=0$. Set now $g=1\ast f\mu^2$. We have, for for any prime $p$ and any power $m\geq 1$:
\begin{equation*}
g(p^m)=1+f(p).
\end{equation*}
Thus, $g(p^m)\geq 0$ for all primes $p$ and all powers $m$, and since $G(s):=\sum_{n=1}^\infty\frac{g(n)}{n^s}=\zeta(s)F_{\mu^2}(s)$, with $F_{\mu^2}(s)$ being analytic in $Re(s)>1-\delta$ with $F_{\mu^2}(1)=0$, we obtain by the Landau's oscillation Theorem that $G(s)$ converges absolutely in $Re(s)>1-\delta$, and also the convergence of its Euler product. Now the Euler product of $G(s)$ is given by:
\begin{equation*}
G(s)=\prod_{p}\left(1+\sum_{m=1}^\infty \frac{1+f(p)}{p^{ms}} \right)=\prod_{p}\left(1+\frac{1+f(p)}{p^s-1} \right)=\prod_p\frac{p^s+f(p)}{p^s-1}.
\end{equation*}
Thus, each Euler factor above is $\neq 0$ in the half plane $Re(s)>1-\delta$, and hence $G(s)\neq 0$ in the half plane $Re(s)>1-\delta$. Thus, $\frac{1}{G(s)}$ is analytic in the half plane $Re(s)>1-\delta$, and since $\frac{1}{\zeta(s)}=\frac{F_{\mu^2}(s)}{G(s)}$, we obtain that $1/\zeta(s)$ is analytic in $Re(s)>1-\delta$. \end{proof}


{\small{\sc \noindent Marco Aymone \\
Departamento de Matem\'atica, Universidade Federal de Minas Gerais, Av. Ant\^onio Carlos, 6627, CEP 31270-901, Belo Horizonte, MG, Brazil.} \\
\textit{Email address:} aymone.marco@gmail.com}
\vspace{0.5cm}

\end{document}